\theoremstyle{definition}
\newtheorem*{theorem*}{Theorem}
\newtheorem{lemma}{Lemma}
\DeclareMathOperator{\sgn}{sgn}
\DeclareMathOperator{\id}{id}
\title{Lattice paths in Young diagrams}
\author{TK Waring}
\date{\today}
\begin{document}

\maketitle

\begin{abstract}
	Fill each box in a Young diagram with the number of paths from the bottom of its column to the end of its row, using steps north and east. Then, any square sub-matrix of this array starting on the south-east boundary has determinant one. We provide a --- to our knowledge --- new bijective argument for this result. Using the same ideas, we prove further identities involving these numbers, which correspond to an integral orthonormal basis of the inner product space with Gram matrix given by the array in question. This provides an explicit answer to a question (listed as unsolved\footnote{In an addendum \cite[p. 584]{stanley}, Stanley notes that Robin Chapman settled the {\em existence} problem stated in the exercise. This argument doesn't appear to be available anywhere, and in this note we provide the required object explicitly.}) raised in Exercise 6.27 c) of Stanley's Enumerative Combinatorics.
\end{abstract}

Here we consider a problem raised in Exercises 6.26 and 6.27 of \cite[p. 232]{stanley}. These problems are solved (see \cite[\S 3 Theorem 2]{carlitz}, \cite{radoux} and the solutions on \cite[p. 267]{stanley}), but here we give a concise bijective proof, using the Lindstr\"om-Gessel-Viennot lemma.

Let $D$ be a Young diagram of a partition $\lambda$, and fill each box $(i,j) \in D$ (numbering \say{matrix-wise}: down then across) with the number of paths from $(\lambda'_j,j)$ to $(i,\lambda_i)$, using steps north and east, and staying within the diagram $D$. That is, $(i,j)$ is filled with the number of paths from the lowest square in its column to the rightmost square in its row. Call this number $D_{i,j}$. For example, with $\lambda=(5,4,3,3)$:
\ytableausetup{centertableaux}
\begin{equation}\label{eq:yd-array}
\begin{ytableau}
	16 & 7 & 2 & 1 & 1 \\
	6 & 3 & 1 & 1 \\
	3 & 2 & 1 \\
	1 & 1 & 1
\end{ytableau}
\end{equation}
Then, the matrix formed by any square sub-array with a 1 in the lower right has determinant 1. The same array of integers arises in discussions of so-called ballot sequences \cite[\S 1]{carlitz}, and of Young's lattice of partitions \cite[p. 223]{stanley-fib}. For instance, from the diagram in \cref{eq:yd-array} we have:
\[ \det \begin{pmatrix} 16 & 7 & 2 \\ 6 & 3 & 1 \\ 3 & 2 & 1  \end{pmatrix} = 1. \]

This result follows directly from the following lemma, due to Lindstr\"om-Gessel-Viennot.
\begin{lemma}\label{thm:lgv}
	Let $G$ be a locally finite directed acyclic graph, and $A=\{a_1,\dots,a_n\}$ and $B=\{b_1,\dots,b_n\}$ sets of {\em source} and {\em destination} vertices, respectively. Write $e(a,b)$ for the number of paths from $a$ to $b$ in $G$, and define a matrix $M$ by $M_{i,j}=e(a_i,b_j)$. Then,
	\[ \det M = \sum_{P = P_1,\dots,P_n} \sgn (\sigma_P), \]
	where the sum is over the collection of $n$-tuples of vertex-disjoint paths $(P_1,\dots,P_n)$ in $G$, where $\sigma_P$ is a permutation of $[n]$, and $P_i$ is a path from $a_i$ to $b_{\sigma_P(i)}$.
\end{lemma}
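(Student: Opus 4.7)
The plan is to expand $\det M$ by the Leibniz formula and interpret the result combinatorially, then construct a sign-reversing involution on path tuples that fail to be vertex-disjoint.

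First, I would write
\[ \det M = \sum_{\sigma \in S_n} \sgn(\sigma) \prod_{i=1}^n e(a_i, b_{\sigma(i)}). \]
Each factor $e(a_i, b_{\sigma(i)})$ counts paths $a_i \to b_{\sigma(i)}$, so the product counts $n$-tuples of paths $P = (P_1,\dots,P_n)$ with $P_i : a_i \to b_{\sigma(i)}$. Switching the order of summation and identifying $\sigma$ with the permutation $\sigma_P$ induced by such a tuple,
\[ \det M = \sum_P \sgn(\sigma_P), \]
where the sum now ranges over all $n$-tuples of paths from the $a_i$ to the $b_j$ in some order. It then suffices to show that the contribution of the non-vertex-disjoint tuples vanishes, since the remaining tuples give exactly the right-hand side of the lemma.

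Next, I would define a sign-reversing involution $\varphi$ on the non-vertex-disjoint tuples. Given such a $P$, let $i$ be the smallest index for which $P_i$ meets some other $P_k$ in a vertex; using the linear order on the vertices of the path $P_i$ (valid since $G$ is acyclic), let $v$ be the earliest such shared vertex along $P_i$; and let $j$ be the smallest index with $v \in P_j$ and $j \neq i$ (necessarily $j > i$, by minimality of $i$). Let $\varphi(P)$ be obtained by swapping the tails of $P_i$ and $P_j$ at $v$: the new $P_i$ follows the old $P_i$ up to $v$ and then the old $P_j$ to $b_{\sigma_P(j)}$, while the new $P_j$ follows the old $P_j$ to $v$ and then the old $P_i$ to $b_{\sigma_P(i)}$. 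Since $\sigma_{\varphi(P)} = \sigma_P \circ (i\,j)$, the sign of $\sigma_P$ is reversed.

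The hard part is verifying that $\varphi^2 = \id$, which amounts to showing that the triple $(i, v, j)$ extracted from $\varphi(P)$ agrees with that extracted from $P$. The multiset of vertices visited by the tuple is preserved under $\varphi$, so no index $k < i$ gains any new intersections, keeping $i$ minimal; the prefix of the new $P_i$ up to $v$ coincides with that of the old $P_i$, so by minimality of $v$ along old $P_i$ no earlier shared vertex appears on new $P_i$; and the rule for picking $j$ is likewise invariant, since $v \in P'_j$ and the indices of paths containing $v$ are unchanged. Granted this, $\varphi$ is a fixed-point-free sign-reversing involution on non-vertex-disjoint tuples, so those tuples cancel in pairs and only the vertex-disjoint ones contribute to $\det M$, completing the proof.
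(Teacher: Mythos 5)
Your proof is correct and is essentially the standard argument that the paper itself defers to (the citation to Aigner, Chapter 29, is precisely this Leibniz-expansion-plus-tail-swapping involution). The key verification that the triple $(i,v,j)$ is recovered from $\varphi(P)$ --- using that paths in a DAG cannot repeat vertices and that the combined vertex set of $P_i$ and $P_j$ is unchanged by the swap --- is handled correctly, so nothing is missing.
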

\begin{proof} See \cite[Chapter 29]{aigner}. \end{proof}

To see this, let $G$ be the graph with the boxes of $D$ as vertices, and directed edges from each box to its northern and eastern neighbours. Given a square $n\times n$ sub-array as above, let $a_1,\dots,a_n$ be the \say{feet} of the columns of $D$ corresponding to the columns of $M$, and $b_1,\dots,b_n$ the ends of the rows. Any path system $P$ as above must have $\sigma_P = \id$, as a pair of paths $a_i \to b_j$ and $a_j \to b_j$ must share a vertex. Moreover, there is exactly one vertex-disjoint tuple $P$ of paths with $\sigma_p = \id$. The 1 in the lower right of $M$ forces the path $a_n \to b_n$ to be a \say{hook} up then right. This implies the same of the path $a_{n-1} \to b_{n-1}$ and so forth. The unique collection of paths in our running example is (poorly) rendered in \cref{eq:yd-paths}.
\begin{equation}\label{eq:yd-paths}
\begin{ytableau}
	\looparrowright & \rightarrow & \rightarrow & \rightarrow & \rightarrow \\
	\uparrow & \looparrowright & \rightarrow & \rightarrow \\
	\uparrow & \uparrow & \looparrowright \\
	\uparrow & \uparrow & \uparrow
\end{ytableau}
\end{equation}

Exercise 6.27 offers an extension, which is also resolved by our method. Suppose that $D$ is self-conjugate (ie $\lambda=\lambda'$), and let $n$ be the size of the Durfee square of the diagram $D$ --- that is, the largest $n$ such that $\lambda_n \ge n$. Let $x_1,\dots,x_n$ be a basis for a real vector space $V$, and define an inner product on $V$ by
\[ \langle x_i, x_j \rangle = D_{i,j}. \]

We exhibit an integral orthonormal basis for $V$. If $G_k = \det [D_{i,j}]_{k \le i,j \le n}$ is the \say{Gram determinant}, then, using Cramer's rule, the result of applying the Gram-Schmidt process to the vectors $x_n,x_{n-1},\dots$ (in that order) is a basis $y_n,\dots,y_1$ of $V$ given by:
\[ G_{j-1}\cdot y_j = \det \begin{pmatrix} x_j & \langle j,j+1 \rangle & \dots & \langle j,n \rangle \\ x_{j+1} & \langle j+1,j+1 \rangle & \dots & \langle j+1,n \rangle \\ \vdots & \vdots & & \vdots \\ x_{n} & \langle n,j+1 \rangle & \dots & \langle n,n \rangle \end{pmatrix} = \det \begin{pmatrix} x_j & D_{j,j+1} & \dots & D_{j,n} \\ x_{j+1} & D_{j+1,j+1} & \dots & D_{j+1,n} \\ \vdots & \vdots & & \vdots \\ x_{n} & D_{n,j+1} & \dots & D_{n,n} \end{pmatrix}  \]
Observe that the matrix in the formal determinant given here is the $(n-j+1)\times (n-j+1)$ submatrix of the Durfee square of $D$, with the first column replaced by $x_j,\dots,x_n$. As such, the above result implies that the Gram determinant $G_{j-1} = 1$, and as such the basis $y_1,\dots,y_n$ is integral. The norm of $y_j$ is $G_j / G_{j-1} = 1$.

Using the above interpretation of determinants in terms of lattice paths, we can derive the coefficients explicitly. Expanding our expression by cofactors, we obtain an expression of the form $y_j = \sum_{i=j}^n (-1)^{i-j} c_{ij}x_i$, with coefficients
\[ c_{ij} = \det \begin{pmatrix} D_{j,j+1} & \dots & D_{j,n} \\ \vdots & & \vdots \\ \widehat{D_{i,j+1}} & \dots & \widehat{D_{i,n}} \\ \vdots & & \vdots \\ D_{n,j+1} & \dots & D_{n,n} \end{pmatrix}, \]
where the hat denotes omitting that row. This is the path matrix from $a_{j+1},\dots,a_n$ to $b_{j},\dots,\widehat{b_i},\dots,b_n$. For example, with $j=1$ and $i=2$, using the tableau given above, $c_{ij}$ is the path determinant of:
\[ \ydiagram [*(white)]{4,4,2,1} * [*(red)]{4+1,0,2+1} * [*(green)]{0,0,0,1+2} \]
First observe that, again, we can restrict ourselves to tuples $(P_j,\dots,P_{n})$ with $\sigma_P=\id$, for the same reason as above. Secondly, for any $k>i$, the path $P_k$ from $a_{k} \to b_{k}$ is uniquely determined (indeed, it is the same \say{hook} described in the original problem). For each $k<i$, the path $P_k: a_{k+1}\to b_{k}$ is determined by a number $m_k$ so that it has the form:
\[ (\lambda'_{k+1},k+1),\dots,(k+1,k+1),\dots,(k+1,m_k),(k,m_k),\dots,(k,\lambda_{k}),\]
where $k+1 \le m_k \le \lambda_k$ In the above example, we have $m_k \in \{2,3,4\}$, corresponding to the paths:
\[ \ydiagram [*(cyan)]{1+4,1+1,1+1,1+1} * {5,4,3,3}, \quad  \ydiagram [*(cyan)]{2+3,1+2,1+1,1+1} * {5,4,3,3} \quad\text{and}\quad \ydiagram [*(cyan)]{3+2,1+3,1+1,1+1} * {5,4,3,3} \]
Since $P_k$ cannot intersect $P_{k+1}$, we have $m_k < m_{k+1}$, and to avoid going outside the Young diagram, we must have $m_k \le \lambda_{k+1}$. In fact, since $\lambda_k \ge \lambda_{k+1}$, applying the second requirement to $m_{i-1}$ is sufficient. Therefore, the sequence $m_j,\dots,m_{i-1}$ is uniquely determined by an $(i-j)$-subset of $\{ j+1, \dots, \lambda_{i} \}$. Since any such a sequence determines a unique tuple $P_j,\dots,P_n$, we have:
\[ c_{ij} = \binom{\lambda_i - j}{i-j}.\]
Explicitly, this give us the expansion:
\[ y_j = \sum_{i=j}^n (-1)^{i-j} \binom{\lambda_i - j}{i-j}x_i. \]

In this example, we glossed over the requirement that $\lambda$ be self-conjugate, which allows for the interpretation of the above as an inner product. The argument goes through regardless, demonstrating the following identity for $i\ge j$:
\begin{equation}\label{eq:main-id} \langle y_j,x_i \rangle = \sum_{k=j}^n (-1)^{j-k} D_{ki} \binom{\lambda_k - j}{k - j} = \delta_{ij}.  \end{equation}
Applied to the conjugate, we have:
\[ \langle y_j',x_i \rangle = \sum_{k=j}^n (-1)^{j-k} D_{ik} \binom{\lambda_k' - j}{k - j} = \delta_{ij}.  \]
Combined, these identities determine the values $D_{ij}$ for $1 \le i,j \le n$. Cutting off initial rows or columns from the Young diagram $D$, the values of $D_{ij}$ outside the Durfee square could also be computed.

This result reduces to, and provides a bijective proof of, the special cases of Exercise 6.27 a) and b). If $\lambda = (2n+1,2n,\dots,2,1)$ then $D_{ij} = C_{2n+2-i-j}$, and the orthonormal basis $y_j$ is:
\[ y_j = \sum_{i=j}^{n+1} (-1)^{i-j}\binom{2n+2 - i - j}{i-j}x_i, \]
If we let primes denote the reflection $i' = (n+1) - i$, we get $\langle x_{i'},x_{j'} \rangle = C_{i'+j'}$ and,
\[ y_{j'} = \sum_{i'=0}^{j'} (-1)^{j'-i'}\binom{i'+j'}{j'-i'}x_{i'}, \]
as expected.

As a final example, if $\lambda = (n,n,\dots,n)$ is the partition of $n^2$, then $D_{ij} = \binom{2n - i - j}{n-i}$, and $c_{ij} = \binom{n-j}{i-j}$. The identity in question is:
\[ \langle y_j,x_i \rangle = \sum_{k=j}^n (-1)^{j-k} \binom{2n - i - k}{n-i} \binom{n - j}{k - j} = \delta_{ij}.  \]
Making the substitution $m = n-m$ on the indexes $i,j,k$, and extending the sum with terms $= 0$, this the sum is:
\begin{equation}\label{eq:subs} \sum_{k} (-1)^{k-j} \binom{i+k}{i} \binom{j}{j-k} = \binom{i}{i-j},  \end{equation}
where we have used the following, which is \cite[\S 1.2.6 eq. 23]{knuth}:
\[ \sum_{k} (-1)^{r-k}\binom{r}{k}\binom{s+k}{n} = \binom{s}{n-r}. \]
Since we require that $j \ge i$ (opposite to \cref{eq:main-id} after the substitution made in \cref{eq:subs}), this implies the claimed identity.

\clearpage
\bibliographystyle{myalpha}
\bibliography{young-refs}

\end{document}